\documentclass[11pt,reqno]{amsart}

\setlength{\textwidth}{6.3in} \setlength{\textheight}{9.25in}
\setlength{\evensidemargin}{0in} \setlength{\oddsidemargin}{0in}
\setlength{\topmargin}{-.3in}

\usepackage{xspace}
\usepackage{tikz}
\usepackage{amsmath,amsthm,amsfonts,amssymb,latexsym,mathrsfs,color}
\usepackage{hyperref}

\newtheorem{theorem}{Theorem}

\theoremstyle{definition}
\newtheorem{definition}[theorem]{Definition}
\newtheorem{ex}[theorem]{Example}

\newcommand{\op}[1]{\operatorname{#1}}

\newcommand{\ASC}{\op{ASC}}
\newcommand{\AP}{\op{AP}}
\newcommand{\MK}{\op{MARK}}
\newcommand{\ODD}{\op{ODD}}
\newcommand{\EVEN}{\op{EVEN}}
\newcommand{\LRMIN}{\op{LRMIN}}
\newcommand{\RLMIN}{\op{RLMIN}}

\newcommand{\asc}{\op{asc}}
\newcommand{\ap}{\op{ap}}
\newcommand{\mk}{\op{mark}}
\newcommand{\odd}{\op{odd}}
\newcommand{\even}{\op{even}}
\newcommand{\lrmin}{\op{lrmin}}
\newcommand{\rlmin}{\op{rlmin}}
\newcommand{\des}{\op{des}}
\newcommand{\desrlmin}{\op{desrlmin}}
\newcommand{\caplat}{\op{cap}}
\newcommand{\cyc}{\op{cyc}}
\newcommand{\exc}{\op{exc}}
\newcommand{\fix}{\op{fix}}
\newcommand{\bk}{\op{bk}}
\newcommand{\redd}{\op{red}}

\newcommand{\stati}{\op{stat}_i}
\newcommand{\Gen}{\op{Gen}}
\newcommand{\C}{\mathcal{C}}
\newcommand{\m}{{\rm M}}

\newcommand{\ms}{\mathfrak{S}}
\newcommand{\msn}{\mathfrak{S}_n}
\newcommand{\mq}{\mathcal{Q}}
\newcommand{\mqn}{\mathcal{Q}_n}
\newcommand{\mdq}{\mathcal{DQ}}

\newcommand{\mmn}{\mathcal{M}_{2n}}

\newcommand{\stirling}[2]{\genfrac{[}{]}{0pt}{}{#1}{#2}}
\newcommand{\arxiv}[1]{\href{http://arxiv.org/abs/#1}{\texttt{arXiv:#1}}}
\linespread{1.25}

\title{Stirling permutations, marked permutations and Stirling derangements}
\author[G.-H.~Duh]{Guan-Huei Duh}
\address{Institute of Mathematics,
        Academia Sinica, Taipei, Taiwan}
\email{arthurduh1@gmail.com (G.-H.~Duh)}
\author[Y.-C.~Lin]{Yen-Chi Roger Lin}
\address{Department of Mathematics,
        National Taiwan Normal University,
         Taipei 116, Taiwan}
\email{yclinpa@gmail.com (Y.-C. Lin)}
\author[S.-M.~Ma]{Shi-Mei Ma}
\address{School of Mathematics and Statistics,
        Northeastern University at Qinhuangdao,
         Hebei 066004, P.R. China}
\email{shimeimapapers@163.com (S.-M. Ma)}
\author[Y.-N. Yeh]{Yeong-Nan Yeh}
\address{Institute of Mathematics,
        Academia Sinica, Taipei, Taiwan}
\email{mayeh@math.sinica.edu.tw (Y.-N. Yeh)}
\subjclass[2010]{Primary 05A15; Secondary 26C10}

\begin{document}
\begin{abstract}
In this paper we introduce the definition of marked permutations.
We first present a bijection between Stirling permutations and marked permutations.
We then present an involution on Stirling derangements. Furthermore, we present
a symmetric bivariate enumerative polynomials on $r$-colored marked permutations.
Finally, we give an explanation of $r$-colored marked permutations by using the language of combinatorial objects.
\end{abstract}

\keywords{Stirling permutations; Marked permutations; Stirling derangements; Increasing trees}

\maketitle

\section{Introduction}
Let $\msn$ denote the symmetric group of all permutations of $[n]$, where $[n]=\{1,2,\ldots,n\}$.
Let $\pi=\pi_1\pi_2\cdots \pi_n\in\msn$.
An \emph{ascent} of $\pi$ is an entry $\pi_i$, $i\in \{2,3,\dots,n\}$, such that $\pi_i>\pi_{i-1}$.
Denote by $\ASC(\pi)$ the set of all ascents of $\pi$, and $\asc(\pi)=\left|\ASC(\pi)\right|$.
For example, $\asc(53\textbf{4}1\textbf{2})=\left|\{2,4\}\right|=2$.
The classical Eulerian polynomial is defined by
\begin{equation*}
A_n(x)=\sum_{\pi\in\msn}x^{\asc(\pi)}.
\end{equation*}
Set $A_0(x)=1$.
The exponential generating function for $A_n(x)$ is
\begin{equation}\label{Axz}
A(x,t)=\sum_{n\geq 0}A_n(x)\frac{t^n}{n!}=\frac{1-x}{e^{t(x-1)}-x}.
\end{equation}

Stirling permutations were introduced by Gessel and Stanley~\cite{Gessel78}. Let $[n]_2$ denote the multiset $\{1,1,2,2,\dots,n,n\}$.
A \emph{Stirling permutation} of order $n$ is a permutation of $[n]_2$ such that
every entry between the two occurrences of $i$ is greater than $i$ for each $i\in [n]$.
Various statistics on Stirling permutations have been extensively studied in the past decades, including descents~\cite{Chen16,Gessel78,Haglund12}, plateaux~\cite{Bona08,Chen16,Haglund12}, blocks~\cite{Remmel14}, ascent
plateaux~\cite{Ma14,Ma1601} and cycle ascent plateaux~\cite{Ma1602}.

Denote by $\mqn$ the set of Stirling permutations of order $n$ and let $\sigma=\sigma_1\sigma_2\cdots\sigma_{2n}\in\mqn$. An occurrence of
an \emph{ascent plateau} is an entry $\sigma_i$, $i\in \{2,3,\ldots,2n-1\}$, such that
$\sigma_{i-1}<\sigma_i=\sigma_{i+1}$ (see~\cite{Ma14}).
Let $\AP(\sigma)$ be the set of all ascent plateaux of $\sigma$, and
$\ap(\sigma)=\left|\AP(\sigma)\right|$.
As an example, $\ap(2211\textbf{3}3)=\left|\{3\}\right|=1$.
Let $$N_n(x)=\sum_{\pi\in\mqn}x^{\ap(\pi)}.$$
Set $N_0(x)=1$.
From~\cite[Theorem 2]{Ma14}, we have
\begin{equation}\label{Ax2t}
\sum_{n\geq 0}N_n(x)\frac{t^n}{n!}=\sqrt{A(x,2t)}.
\end{equation}

Let $w=w_1w_2\cdots w_n$ be a word on $[n]$. A \emph{left-to-right minimum}
of $w$ is an element $w_i$ such that $w_i<w_j$ for every $j\in [i-1]$ or $i=1$;
a \emph{right-to-left minimum}
of $w$ is an element $w_i$ such that $w_i<w_j$ for every $j\in \{i+1,i+2,\ldots,n\}$ or $i=n$.
Let $\LRMIN(w)$ and $\RLMIN(w)$ denote the set of entries of left-to-right minima and right-to-left minima of $w$, respectively.
Set $\lrmin(w)=\left|\LRMIN(w)\right|$ and $\rlmin(w)=\left|\RLMIN(w)\right|$.
For example, $\lrmin(\textbf{2}233\textbf{1}1)=\left|\{1,2\}\right|=2$ and
$\rlmin(22331\textbf{1})=\left|\{1\}\right|=1$.

Motivated by~\eqref{Ax2t}, we now introduce the definition of marked permutations.
Given a permutation $\pi=\pi_1\pi_2\cdots \pi_n\in\msn$,
a \emph{marked permutation} is a permutation with marks on some of its non-left-to-right minima.
An element $i$ is denoted by $\overline{\imath}$ when it is marked. Let $\overline{\ms}_n$ denote the set of marked permutations of $[n]$.
For example, $\overline{\ms}_1=\{1\},~\overline{\ms}_2=\{12,1\overline{2},21\}$ and
\begin{align*}
\overline{\ms}_3 =&\{1~2~3,\ 1~3~2,\ 2~1~3,\ 2~3~1,\ 3~1~2,\ 3~2~1,\ 1~\overline{2}~3,\ 1~2~\overline{3}, \\
&1~\overline{2}~\overline{3},\ 1~\overline{3}~2,\ 1~3~\overline{2},\ 1~\overline{3}~\overline{2},\
2~1~\overline{3},\ 2~\overline{3}~1,\ 3~1~\overline{2}\}.
\end{align*}

This paper is organized as follows.
In Section~\ref{Section02}, we present a bijection between Stirling permutations and marked permutations.
In Section~\ref{Section03}, we use an involution to prove an identity between Stirling derangements and perfect matchings.
In Section~\ref{Section04}, we present
a symmetric bivariate enumerative polynomials on $r$-colored marked permutations.
And in Section~\ref{Section05}, we use the language of combinatorial objects to give an explanation of marked permutations.

\section{A bijection between Stirling permutations and marked permutations}
\label{Section02}
Let $\sigma=\sigma_1\sigma_2\cdots\sigma_{2n}\in\mqn$ be a Stirling permutation of order $n$.
An entry $k$ of $\sigma$ is called an \emph{even indexed entry} (resp.~\emph{odd indexed entry}) if the first appearance of $k$ occurs at an even (resp.~odd) position of $\sigma$.
Let $\EVEN(\sigma)$ (resp.~$\ODD(\sigma)$) denote the set of even (resp.~odd) indexed entries of $\sigma$,
$\even(\sigma)=\left|\EVEN(\sigma)\right|$, and $\odd(\sigma)=\left|\ODD(\sigma)\right|$.
For example, $\even(221\textbf{3}31)=\left|\{3\}\right|=1$ and $\odd(\textbf{2}2\textbf{1}{3}31)=\left|\{1,2\}\right|=2$.

Given $\pi\in \overline{\ms}_n$,
let $\MK(\pi)$ denote the set of marked entries of $\pi$, and $\mk(\pi)=\left|\MK(\pi)\right|$.
The statistics $\ASC(\pi), \asc(\pi), \LRMIN(\pi)$ and $\lrmin(\pi)$ are defined by forgetting the marks of $\pi$.
For example, $\asc(3\overline{\textbf{5}}21\overline{\textbf{4}67})=4$, $\mk(3\overline{\textbf{5}}21\overline{\textbf{4}}67)=2$
and $\lrmin(\textbf{3}\overline{5}\textbf{2}\textbf{1}\overline{4}67)=3$.

A \emph{block} in an element of $\mqn$ or $\overline{\ms}_n$ is a substring which begins with a left-to-right minimum,
and contains exactly this one left-to-right minimum;
moreover, the substring is maximal, i.e. not contained in any larger such substring.
It is easily derived by induction that any Stirling
permutation or permutation has a unique decomposition as a sequence of blocks.
\begin{ex}
The block decompositions of $34664325527711$ and $3\overline{4}62\overline{5}71$ are respectively given by $[346643][255277][11]$ and $[3\overline{4}6][25\overline{7}][1]$.
\end{ex}

We now present the first result of this paper.
\begin{theorem}\label{thm01}
For $n\geq 1$, we have
\begin{equation}
\sum_{\sigma\in\mqn} q^{\lrmin(\sigma)} x^{\ap(\sigma)} y^{\even(\sigma)}=
\sum_{\pi\in \overline{\ms}_n}q^{\lrmin(\pi)} x^{\asc(\pi)} y^{\mk(\pi)}.
\end{equation}
\end{theorem}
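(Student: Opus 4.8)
The plan is to build a bijection $\phi\colon\mqn\to\overline{\ms}_n$ recursively, by insertion of the largest letter, and to show that it carries the triple $(\lrmin,\ap,\even)$ on Stirling permutations to the triple $(\lrmin,\asc,\mk)$ on marked permutations. Both families carry a natural $(2n-1)$-fold generating-tree structure. On the Stirling side, every $\sigma\in\mqn$ arises from a unique $\sigma'\in\mathcal{Q}_{n-1}$ by inserting the pair $nn$ into one of the $2n-1$ gaps of $\sigma'$ (the $2n-2$ internal gaps together with the two ends). On the marked side, every $\pi\in\overline{\ms}_n$ arises from a unique $\pi'\in\overline{\ms}_{n-1}$ by inserting the single letter $n$ into one of the $n$ gaps of $\pi'$ and, when the gap is not the front, deciding whether to mark it; this gives $1+2(n-1)=2n-1$ choices. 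I would define $\phi$ so that the gap chosen on the Stirling side is matched, weight-class by weight-class, with a gap-plus-mark choice on the marked side.

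First I would record the effect of each marked insertion. Since $n$ is the maximum, it becomes a new left-to-right minimum exactly when placed at the front (raising $\lrmin$ by $1$, with no marking allowed there), and $\lrmin$ is unchanged otherwise. Inserting $n$ at the end or at an internal descent raises $\asc$ by $1$, while inserting it at an internal ascent leaves $\asc$ unchanged; marking contributes $y$. Writing $a=\asc(\pi')$, the $2n-1$ children of $\pi'$ then split into one child of relative weight $q$ (front), $a$ children of weight $1$ and $a$ of weight $y$ (internal ascents, unmarked and marked), and $n-1-a$ of weight $x$ and $n-1-a$ of weight $xy$ (internal descents and the end, unmarked and marked).

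Next I would analyze the Stirling side. Inserting $nn$ after the $p$-th entry ($p=0,\dots,2n-2$) raises $\lrmin$ by $1$ iff $p=0$, and makes $n$ an even-indexed entry iff its first copy lands in an even position, i.e.\ iff $p$ is odd; so $\even$ grows precisely on the odd gaps. The delicate point is $\ap$: the block $nn$ is a new ascent plateau whenever $p\ge 1$, but the same insertion destroys an old ascent plateau exactly when $p$ flanks one. Concretely, an ascent plateau of $\sigma'$ at positions $j,j+1$ (so $\sigma'_{j-1}<\sigma'_j=\sigma'_{j+1}$, forcing $j\ge 2$) is destroyed exactly by the two insertions $p=j-1$ and $p=j$, each then giving net $\Delta\ap=0$, while every remaining non-front gap gives $\Delta\ap=+1$. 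The main obstacle is this plateau bookkeeping, and the observation that resolves it is that the two guarding gaps $j-1,j$ of each ascent plateau are consecutive integers, hence one is even and one is odd; since a value occurs only twice, no three equal letters are adjacent, so distinct ascent plateaux have positions differing by at least $2$ and their guarding pairs are disjoint.

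Combining these counts, with $b=\ap(\sigma')$, the $2n-1$ children of $\sigma'$ split into one child of weight $q$ (the front gap) and, among the $2n-2$ non-front gaps, exactly $b$ with $\Delta\ap=0$ and $p$ even (weight $1$), $b$ with $\Delta\ap=0$ and $p$ odd (weight $y$), $n-1-b$ with $\Delta\ap=+1$ and $p$ even (weight $x$), and $n-1-b$ with $\Delta\ap=+1$ and $p$ odd (weight $xy$); the even/odd balance is exactly the content of the consecutive-pair observation. This child-weight multiset coincides with the marked one once $b=\ap(\sigma')$ equals $a=\asc(\pi')$, which holds by the inductive hypothesis $\asc(\phi(\sigma'))=\ap(\sigma')$. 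I would therefore fix, for each weight class, any bijection between the equinumerous sets of Stirling gaps and of marked gap-mark choices, and define $\phi(\sigma)$ from $\phi(\sigma')$ accordingly. An induction then shows simultaneously that $\phi$ is a bijection (the two weighted generating trees are isomorphic) and that it transports $(\lrmin,\ap,\even)$ to $(\lrmin,\asc,\mk)$, giving the identity; equivalently, both sides satisfy the common recursion $F_n=[\,q+(n-1)(1+y)x\,]F_{n-1}+(1+y)x(1-x)\,\partial_x F_{n-1}$ with $F_0=1$.
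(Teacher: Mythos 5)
Your proposal is correct and follows essentially the same route as the paper: both build the bijection recursively by inserting the pair $nn$ into a Stirling permutation of order $n-1$ and matching the resulting $2n-1$ children with the $2n-1$ ways of inserting $n$ (possibly marked) into a marked permutation of $[n-1]$, with the same case analysis (front gap raises $\lrmin$; parity of the insertion position governs $\even$ versus the mark; gaps flanking an ascent plateau versus gaps at ascents give $\Delta\ap=0=\Delta\asc$). The only substantive difference is that the paper fixes a canonical child-to-child correspondence (insert before the same letter $s$, mark determined by parity) and thereby gets the stronger equidistribution of the set-valued statistics $(\LRMIN,\AP,\EVEN)$ and $(\LRMIN,\ASC,\MK)$, whereas you match only the cardinalities of the weight classes via the (correct) observation that the two guarding gaps of each plateau have opposite parity --- which suffices for the stated polynomial identity but leaves the bijection non-canonical.
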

\begin{proof}
We prove a stronger result: there is a one-to-one correspondence between the set statistics $(\LRMIN,\AP,\EVEN)$ on Stirling permutations and $(\LRMIN,\ASC,\MK)$ on marked permutations.

Define
\begin{align*}
Q(n;X,Y,Z)&=\{\sigma\in\mqn\mid \LRMIN(\sigma)=X,\AP(\sigma)=Y,\EVEN(\sigma)=Z\}, \mbox{ and} \\
S(n;X,Y,Z)&=\{\pi\in \overline{\ms}_n\mid \LRMIN(\pi)=X,\ASC(\pi)=Y,\MK(\pi)=Z\},
\end{align*}
where $X,Y,Z\subseteq[n]$.
Clearly, they form partitions of all Stirling permutations and marked permutations respectively.

Now we start to construct a bijection, denoted by $\Phi$, between Stirling permutations and marked permutations.
In addition, it maps each $Q(n;X,Y,Z)$ onto $S(n;X,Y,Z)$.
When $n=1$, it is clear that $11\in Q(1;\{1\},\emptyset,\emptyset)$ and $1\in S(1;\{1\},\emptyset,\emptyset)$.
Hence setting $\Phi(11)=1$ satisfies the requirement.

Fix $m\geq2$, and assume that $\Phi$ is a bijection between $Q(m-1;X,Y,Z)$ and $S(m-1;X,Y,Z)$ for all possible $X,Y,Z$.
Let $\sigma'\in\mq_m$ be obtained from some $\sigma\in Q(m-1;X,Y,Z)$
by inserting the substring $mm$ into $\sigma$.
By the assumption, $\Phi(\sigma)=\pi\in S(m-1;X,Y,Z)$.

If $mm$ is placed at the front of $\sigma$, that is,
  $\sigma'= mm\sigma$, then we let $\Phi(\sigma')=m\pi$. In this case, we have
  $\sigma'\in Q(m;X\cup\{m\},Y,Z)$ and $\Phi(\sigma')\in S(m;X\cup\{m\},Y,Z)$.
Notice that this is the only way to produce a new block.

Otherwise, suppose $\sigma'$ is obtained from $\sigma$
by inserting $mm$ into the $p^{\text{th}}$ block.
Let $r$ be the left-to-right minimum contained in the $p^{\text{th}}$ block of $\sigma$.
There are three possible cases:
\begin{enumerate}
  \item [\rm (i)]
  If $mm$ is inserted immediately before the second $r$, then $\Phi(\sigma')=\pi'$ is obtained by inserting
  a marked $\overline{m}$ at the end of the $p^{\text{th}}$ block of $\pi$.
  Note that $m$ is an additional even
indexed entry, as well an ascent plateau, of $\sigma'$ after inserting $mm$ into $\sigma$.
Meanwhile, $m$ is a marked element, as well an ascent, of $\pi'$.
Hence $\sigma'\in Q(m;X,Y\cup\{m\},Z\cup\{m\})$ and $\Phi(\sigma')\in S(m;X,Y\cup\{m\},Z\cup\{m\})$.
  \item [\rm (ii)] If $mm$ is inserted immediately before $s$, $s\neq r$, then $\Phi(\sigma')=\pi'$ is obtained by inserting
  $m$ or $\overline{m}$ into the $p^{\text{th}}$ block of $\pi$ such that $m$ is immediately before $s$.
  The inserted entry $m$ is marked if and only if $m$ is an even
  indexed entry of $\sigma'$.
  When $s\in Y$, let $Y'=\left(Y\cup\{m\}\right)\backslash\{s\}$. Otherwise, let $Y'=Y\cup\{m\}$.
  When $m$ is an even indexed entry, let $Z'=Z\cup\{m\}$. Otherwise, let $Z'=Z$.
  In each possible case, we see that $\sigma'\in Q(m;X,Y',Z')$ and $\Phi(\sigma')\in S(m;X,Y',Z')$.
 \item [\rm (iii)] If $mm$ is inserted at the end of the $p^{\text{th}}$ block, then $\Phi(\sigma')=\pi'$ is obtained by inserting
  an unmarked $m$
  at the end of the $p^{\text{th}}$ block of $\pi$.
  Note that $\sigma$ does not gain any additional even indexed entry after inserting $m$, but obtain the ascent plateau
  of $m$. On the other hand, $m$ is a new ascent of $\pi'$ after inserting $m$ into $\pi$.
  Hence $\sigma'\in Q(m;X,Y\cup\{m\},Z)$ and $\Phi(\sigma')\in S(m;X,Y\cup\{m\},Z)$.
\end{enumerate}
The above argument shows that $\Phi(\mqn)\subseteq \overline{\ms}_n$, and that $\Phi$ is one-to-one on $\mqn$.
Since the cardinality of $\mqn$ is the same as that of $\overline{\ms}_n$, $\Phi$ must be a bijection between
$\mqn$ and $\overline{\ms}_n$.
By induction, we see that $\Phi$ is the desired
bijection between Stirling permutations and marked permutations.
\end{proof}
\begin{ex}
Consider $\sigma=266255133441\in \mq_6$. The correspondence between $\sigma$ and
$\Phi(\sigma)$ is built
up as follows:
\begin{equation*}
  \begin{array}{rcl}
    [11] & \Leftrightarrow & [1] \\ \relax
    [22][11] & \Leftrightarrow& [2] \, [1] \\ \relax
    [22][1331] & \Leftrightarrow& [2] \, [1 \, \overline{3}] \\
    \relax
    [22][134431] &\Leftrightarrow & [2] \, [1 \, {4} \, \overline{3}] \\ \relax
    [2255][134431] & \Leftrightarrow& [2 \, 5] \, [1 \, {4} \, \overline{3}] \\ \relax
    [266255][134431] & \Leftrightarrow& [2 \, 5 \, \overline{6}] \, [1 \, {4} \, \overline{3}]
  \end{array}
\end{equation*}
\end{ex}

Exploiting the bijection $\Phi$ used in the proof of Theorem~\ref{thm01}, we also get the following result.
\begin{theorem}
  For $n \geq 1$, we have
  \begin{equation}
    \label{eq:sign-balance-mark}
    \sum_{\sigma\in\mqn} x^{\ap(\sigma)}(-1)^{\even(\sigma)}
    = \sum_{\pi\in \overline{\ms}_n}x^{\asc(\pi)}(-1)^{\mk(\pi)}
    = 1.
  \end{equation}
\end{theorem}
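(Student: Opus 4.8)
The plan is to deduce the first equality at once from Theorem~\ref{thm01} and then to establish the second equality by a sign-reversing involution on marked permutations.

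First I would obtain the first equality by specializing the bijective identity of Theorem~\ref{thm01}. Setting $q=1$ and $y=-1$ in
\[
\sum_{\sigma\in\mqn} q^{\lrmin(\sigma)} x^{\ap(\sigma)} y^{\even(\sigma)}=
\sum_{\pi\in \overline{\ms}_n}q^{\lrmin(\pi)} x^{\asc(\pi)} y^{\mk(\pi)}
\]
yields exactly $\sum_{\sigma\in\mqn} x^{\ap(\sigma)}(-1)^{\even(\sigma)} = \sum_{\pi\in \overline{\ms}_n}x^{\asc(\pi)}(-1)^{\mk(\pi)}$, so it remains only to evaluate the right-hand side.

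Second, to show the marked-permutation sum equals $1$, I would exploit that marks sit only on non-left-to-right minima and that both $\asc$ and $\LRMIN$ ignore the marks. For a fixed underlying permutation $w\in\msn$ with $k = n - \lrmin(w)$ non-left-to-right minima, the marked permutations lying over $w$ are in bijection with subsets of those $k$ positions; a marking that selects $j$ of them contributes $(-1)^{j}x^{\asc(w)}$. Summing over all markings gives $x^{\asc(w)}\sum_{j=0}^{k}\binom{k}{j}(-1)^{j} = x^{\asc(w)}(1-1)^{k}$, which vanishes whenever $k\geq 1$. Equivalently, one may phrase this as the sign-reversing involution that toggles the mark on the leftmost non-left-to-right minimum: this leaves the underlying permutation, and hence $\asc$, unchanged while flipping $(-1)^{\mk}$, so it cancels every marked permutation having at least one non-left-to-right minimum.

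The only surviving terms therefore come from permutations $w$ with $k=0$, i.e.\ $\lrmin(w)=n$, which forces $w$ to be strictly decreasing, $w = n\,(n-1)\cdots 21$. This unique permutation carries no marks, has $\asc(w)=0$, and contributes $x^{0}(-1)^{0}=1$, giving the claimed value. The step deserving the most care is checking that toggling a mark is genuinely an involution on $\overline{\ms}_n$ — that marks may be freely added to or removed from any non-left-to-right minimum without leaving the class — but this is immediate from the definition, since the left-to-right minima of a marked permutation are read off after forgetting the marks. There is no serious obstacle here; the content of the statement is simply the combinatorial collapse of the signed marking sum onto the single decreasing permutation.
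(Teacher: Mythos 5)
Your proposal is correct and follows essentially the same route as the paper: the first equality is the specialization $q=1$, $y=-1$ of Theorem~\ref{thm01}, and the second follows from a sign-reversing, ascent-preserving involution that toggles the mark on a canonically chosen non-left-to-right minimum (the paper picks the smallest, you pick the leftmost; either works), with the decreasing permutation $n\,(n-1)\cdots 21$ as the unique fixed point contributing $1$. Your additional binomial-sum phrasing $(1-1)^k=0$ over each underlying permutation is an equivalent, equally valid packaging of the same cancellation.
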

\begin{proof}
From Theorem~\ref{thm01}, we get the first equality of~\eqref{eq:sign-balance-mark}.
For the last equality of~\eqref{eq:sign-balance-mark}, we will consider
an involution $\iota$ on $\overline{\ms}_n$ as follows.  For a marked permutation $\pi$ that has non-left-to-right minima, define $\iota(\pi)$ to be the marked permutation obtained from $\pi$ by changing the marking of the smallest non-left-to-right minimum of $\pi$. For example, when $\pi = 2 \, \overline{\textbf{5}} \, \textbf{3} \, 1 \, \textbf{4}$, then $\iota(\pi) = 2 \, \overline{\textbf{5}} \, \overline{\textbf{3}} \, 1 \, \textbf{4}$. This map $\iota$ is clearly a sign-reversing involution because the number of marks is either plus 1 or minus 1. It is also clear that $\iota$ preserves the ascent statistic since no entry leaves its position.  The only marked permutation in $\overline{\ms}_n$ which cannot be mapped by $\iota$ is the one in which every entry is a left-to-right minimum, i.e., the marked permutation $n \, (n-1) \, \cdots \, 2 \, 1$, whose ascent is $0$. This completes the proof.
\end{proof}

\begin{theorem}\label{thm02}
We have
  \begin{equation*}
   \sum_{n\geq0} \sum_{\pi\in \overline{\ms}_n} q^{\lrmin(\pi)}x^{\asc(\pi)}y^{\mk(\pi)} \frac{t^n}{n!}
    = A\bigl(x,t(1+y)\bigr)^{\frac{q}{1+y}},
  \end{equation*}
  where $A(x,z)$ is the exponential generating function given by~\eqref{Axz}.
\end{theorem}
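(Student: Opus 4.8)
The plan is to exploit the block decomposition of marked permutations, which renders the three statistics additive and lets me apply the exponential formula. Write $F(q,x,y,t)=\sum_{n\ge0}\sum_{\pi\in\overline{\ms}_n}q^{\lrmin(\pi)}x^{\asc(\pi)}y^{\mk(\pi)}\,t^n/n!$. First I would record the key observation that if $\pi=B_1B_2\cdots B_k$ is the block decomposition of $\pi$, then $\lrmin(\pi)=k$, since each block carries exactly one left-to-right minimum, while $\asc$ and $\mk$ are additive over the blocks: there is no ascent at a block boundary, because the first entry of each block is a left-to-right minimum and hence smaller than its predecessor, and every marked entry lies strictly inside a single block. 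Consequently the weight factors as $q^{\lrmin(\pi)}x^{\asc(\pi)}y^{\mk(\pi)}=\prod_{i=1}^{k}q\,x^{\asc(B_i)}y^{\mk(B_i)}$.

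Next I would compute the weighted generating function of a single block of size $j$. A block on a $j$-element value set is an arrangement of those values beginning with their minimum, together with a choice of marks on any subset of its $j-1$ non-initial entries. Since ascents ignore marks and the number of markable entries equals $j-1$ for every arrangement, the mark sum contributes the factor $(1+y)^{j-1}$ independently of the arrangement; this decoupling is what later produces the substitution $t\mapsto t(1+y)$. The arrangement sum $\sum x^{\asc}$ equals $1$ when $j=1$ and equals $x\,A_{j-1}(x)$ when $j\ge2$, because the initial minimum forces one ascent at the second position while the remaining $j-1$ entries realize the ascent distribution of $\mathfrak{S}_{j-1}$. Thus the single-block exponential generating function is $B(x,y,t)=t+\sum_{j\ge2}x\,A_{j-1}(x)(1+y)^{j-1}\,t^j/j!$.

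I would then invoke the exponential formula. Listing the blocks of a marked permutation in decreasing order of their minima gives a weight-preserving bijection between $\overline{\ms}_n$ and set partitions of $[n]$ in which each part is endowed with a block structure; since the block weight depends only on the relative order of the part, the exponential formula yields $F=\exp\!\bigl(q\,B(x,y,t)\bigr)$. It remains to identify $B$, which is cleanest by differentiation: $\partial_t B=1+x\sum_{k\ge1}A_k(x)(1+y)^k t^k/k!=(1-x)+xA\bigl(x,t(1+y)\bigr)$, using $A_0(x)=1$. On the other hand, from the closed form in~\eqref{Axz} one checks the identity $\partial_z\log A(x,z)=(1-x)+xA(x,z)$, so the function $\tfrac{1}{1+y}\log A\bigl(x,t(1+y)\bigr)$ has the same $t$-derivative and the same value $0$ at $t=0$ as $B$. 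Hence $B=\tfrac{1}{1+y}\log A\bigl(x,t(1+y)\bigr)$, and $F=\exp(qB)=A\bigl(x,t(1+y)\bigr)^{q/(1+y)}$, as claimed.

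I expect the main obstacle to be the justification of the exponential-formula step rather than the final calculus: one must verify carefully that the decreasing-minima ordering makes a marked permutation literally the same datum as an unordered assembly of blocks, and that the block weight is genuinely a function of the block size alone, so that passing to $\exp$ is legitimate. The independence of the mark factor $(1+y)^{j-1}$ from the arrangement, which decouples $y$ and is responsible for the shift $t\mapsto t(1+y)$ appearing inside $\log A$, is the conceptual heart of the argument.
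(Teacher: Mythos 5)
Your proof is correct, but it takes a genuinely different route from the paper's. The paper's argument is essentially two lines: it quotes the identity $\sum_{n\geq0}\sum_{\pi\in\msn} q^{\lrmin(\pi)}x^{\asc(\pi)}\,t^n/n! = A(x,t)^q$ (obtained from Brenti together with the Foata--Sch\"utzenberger fundamental transformation, which trades $(\cyc,\exc)$ for $(\lrmin,\asc)$), then observes that each of the $n-\lrmin(\pi)$ non-left-to-right minima can be independently marked or not, producing a factor $(1+y)^{n-\lrmin(\pi)}$, which is absorbed by the substitutions $q\mapsto q/(1+y)$ and $t\mapsto t(1+y)$. You instead derive everything from the block decomposition plus the exponential formula: each block contributes exactly one left-to-right minimum, no ascent occurs at a block boundary, the marks decouple into $(1+y)^{j-1}$ per block of size $j$, and the single-block generating function $B$ is identified with $\frac{1}{1+y}\log A\bigl(x,t(1+y)\bigr)$ via the derivative identity $\partial_z\log A(x,z)=(1-x)+xA(x,z)$, which indeed holds for \eqref{Axz}. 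Your computations check out (in particular the single-block ascent sum $xA_{j-1}(x)$ and the forced decreasing order of block minima that legitimizes the $\textsc{Set}$ construction). What your route buys is self-containedness: setting $y=0$ it reproves the cited identity $A(x,t)^q$ rather than assuming it, and it makes explicit the structural reason for the shift $t\mapsto t(1+y)$. What it costs is length and the extra care about the exponential formula's hypotheses, which you correctly flag and which are satisfied. Both arguments ultimately rest on the same combinatorial point, namely that the marks are independent of the arrangement and live exactly on the non-left-to-right minima.
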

\begin{proof}
Combining~\cite[Proposition 7.3]{Bre00} and the fundamental transformation introduced by Foata and Sch\"utzen\-berger~\cite{FS70}, we have
\begin{equation*}
\sum_{n\geq0} \sum_{\pi\in\msn} q^{\lrmin(\pi)} x^{\asc(\pi)} \frac{t^n}{n!}
=\sum_{n\geq0}\sum_{\pi\in\msn} q^{\cyc(\pi)} x^{\exc(\pi)} \frac{t^n}{n!}
=A(x,t)^q
\end{equation*}

For a permutation $\pi$ in $\msn$ with $\lrmin(\pi)=\ell$, there are $n-\ell$ entries that could be either marked or not. Therefore, we have
\begin{align*}
\sum_{n\geq0} \sum_{\pi\in \overline{\ms}_n} q^{\lrmin(\pi)}x^{\asc(\pi)}y^{\mk(\pi)} \frac{t^n}{n!}&=
\sum_{n\geq0} \sum_{\pi\in\msn} q^{\lrmin(\pi)} x^{\asc(\pi)} (1+y)^{n-\lrmin(\pi)} \frac{t^n}{n!} \\
&= \sum_{n\geq0} \sum_{\pi\in\msn} \left(\frac{q}{1+y}\right)^{\lrmin(\pi)} x^{\asc(\pi)} \frac{(t(1+y))^n}{n!} \\
&= A\bigl(x,t(1+y)\bigr)^{\frac{q}{1+y}}.
\end{align*}
\end{proof}

Combining Theorem~\ref{thm01} and Theorem~\ref{thm02}, we have
\begin{equation*}
\sum_{n\geq0}\sum_{\sigma\in\mqn}y^{\even(\sigma)} \frac{t^n}{n!}
=\sum_{n\geq0}\sum_{\pi\in \overline{\ms}_n} y^{\mk(\pi)} \frac{t^n}{n!}
=A\bigl(1,t(1+y)\bigr)^{\frac{1}{1+y}}.
\end{equation*}

Let $\stirling{n}{i}$ be the (signless) Stirling number of the first kind, i.e., the number of permutations of $\msn$ with $i$ cycles.
Note that $\even(\sigma)+\odd(\sigma)=n$ for $\sigma \in\mqn$. Let
\begin{equation*}
  E_n(p,q)=\sum_{\sigma\in\mqn}p^{\odd(\sigma)}q^{\even(\sigma)}.
\end{equation*}
Now we present the following result.
\begin{theorem}
For $n\geq 1$, we have
\begin{equation}\label{Enpq}
E_n(p,q)=\sum_{k=0}^n\stirling{n}{k}p^k(p+q)^{n-k}.
\end{equation}
In particular,
$E_n(1,1)=(2n-1)!!,~E_n(p,0)=n!p^n,E_n(1,-1)=1,~E_n(-1,1)=(-1)^n$
for $n\geq 2$.
\end{theorem}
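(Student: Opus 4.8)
The plan is to evaluate the distribution of $\even$ over $\mqn$ by transporting it, via the bijection $\Phi$ of Theorem~\ref{thm01}, to the distribution of $\mk$ over marked permutations, and then to collapse the marks onto the underlying permutations. First, since every entry of a Stirling permutation is either odd or even indexed, we have $\odd(\sigma)+\even(\sigma)=n$, so that
\[
E_n(p,q)=\sum_{\sigma\in\mqn}p^{\,n-\even(\sigma)}q^{\even(\sigma)}=p^n\sum_{\sigma\in\mqn}\left(\frac{q}{p}\right)^{\even(\sigma)}.
\]
By Theorem~\ref{thm01} the statistic $\EVEN$ on $\mqn$ is equidistributed with $\MK$ on $\overline{\ms}_n$, so, writing $y=q/p$, we get $\sum_{\sigma}(q/p)^{\even(\sigma)}=\sum_{\pi\in\overline{\ms}_n}(q/p)^{\mk(\pi)}$. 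This reduces the problem to a pure count of marked permutations weighted by their number of marks.

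Second, I would group the marked permutations according to their underlying (unmarked) permutation, exactly as in the first step of the proof of Theorem~\ref{thm02}. A permutation $\tau\in\msn$ with $\lrmin(\tau)=\ell$ has $n-\ell$ non-left-to-right minima, each of which may independently carry a mark or not; hence the marked permutations lying over $\tau$ contribute $(1+q/p)^{\,n-\ell}$. Summing over $\tau$ and using that the number of permutations of $[n]$ with exactly $\ell$ left-to-right minima equals $\stirling{n}{\ell}$ — a consequence of the Foata--Sch\"utzenberger correspondence already invoked in Theorem~\ref{thm02}, which carries left-to-right minima to cycles — we obtain
\[
\sum_{\pi\in\overline{\ms}_n}\left(\frac{q}{p}\right)^{\mk(\pi)}=\sum_{\tau\in\msn}\left(1+\frac{q}{p}\right)^{\,n-\lrmin(\tau)}=\sum_{\ell=0}^{n}\stirling{n}{\ell}\left(\frac{p+q}{p}\right)^{\,n-\ell}.
\]

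Multiplying by $p^n$ and absorbing the factor $p^{\,n-\ell}$ gives $E_n(p,q)=\sum_{\ell=0}^n\stirling{n}{\ell}p^{\ell}(p+q)^{\,n-\ell}$, which is precisely~\eqref{Enpq}. For the special evaluations it is cleanest to recognize the right-hand side as a scaled rising factorial: since $\sum_{\ell}\stirling{n}{\ell}z^{\ell}=z(z+1)\cdots(z+n-1)$, taking $z=p/(p+q)$ and multiplying by $(p+q)^n$ yields the closed product form $E_n(p,q)=\prod_{j=0}^{n-1}\bigl(p+j(p+q)\bigr)$. From this, $E_n(1,1)=\prod_{j=0}^{n-1}(1+2j)=(2n-1)!!$ and $E_n(p,0)=\prod_{j=0}^{n-1}(p+jp)=n!\,p^n$; while $E_n(1,-1)$ and $E_n(-1,1)$ are read off directly from~\eqref{Enpq}, where the factor $(p+q)^{\,n-\ell}$ vanishes unless $\ell=n$, leaving only $\stirling{n}{n}p^n=p^n$, so that $E_n(1,-1)=1$ and $E_n(-1,1)=(-1)^n$.

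The one step that is not purely formal is the identification of the left-to-right-minima distribution on $\msn$ with the Stirling numbers $\stirling{n}{\ell}$; the rest is bookkeeping inherited from Theorems~\ref{thm01} and~\ref{thm02}. I expect this to be the only place requiring an explicit appeal to the fundamental transformation, and it is exactly the ingredient the preceding proofs already supply.
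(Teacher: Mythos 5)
Your proof is correct, but it takes a genuinely different route from the paper's. The paper works directly on Stirling permutations: inserting the pair $nn$ into a permutation of order $n-1$ either creates a new even indexed entry (in $n-1$ ways, immediately after an odd position) or does not (in $n$ ways), which yields the recurrence $E(n,k)=(n-1)E(n-1,k-1)+nE(n-1,k)$; solving it gives the product $E_n(q)=\prod_{i=1}^n\bigl(i+(i-1)q\bigr)$, and the Stirling numbers enter only at the very end via $\sum_k\stirling{n}{k}x^k=x(x+1)\cdots(x+n-1)$. You instead transport $\even$ to $\mk$ through the bijection $\Phi$ of Theorem~\ref{thm01}, collapse the marks over each underlying permutation (each of the $n-\ell$ non-left-to-right minima independently marked or not, as in the proof of Theorem~\ref{thm02}), and invoke the classical fact that $\stirling{n}{\ell}$ counts permutations of $[n]$ with exactly $\ell$ left-to-right minima. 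Your route makes each summand $\stirling{n}{\ell}p^{\ell}(p+q)^{n-\ell}$ combinatorially transparent --- it literally counts the marked permutations lying over permutations with $\ell$ left-to-right minima --- at the cost of depending on $\Phi$ and on the Foata correspondence; the paper's argument is self-contained and produces the closed product form (which you rederive only to handle the $(1,1)$ and $(p,0)$ evaluations) as an intermediate step rather than an afterthought. Both arguments silently treat the passage through $q/p$ as a polynomial identity after verifying it for $p\neq0$, which is harmless, and all four special evaluations in your write-up check out.
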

\begin{proof}
There are two ways in which a permutation $\sigma'\in\mq_{n}$ with $\even(\sigma')=k$ can be obtained from a permutation $\sigma=\sigma_1\sigma_2\cdots \sigma_{2n-2}\in\mq_{n-1}$.
\begin{enumerate}
  \item [(i)]
  If $\even(\sigma)=k-1$, then we can insert the two copies of $n$ right after $\sigma_{2i-1}$, where $i\in [n-1]$.
  \item [(ii)] If $\even(\sigma)=k$, then we can insert the two copies of $n$ in the front of $\sigma$ or right after $\sigma_{2i}$, where $i\in [n-1]$.
\end{enumerate}
Clearly, $E(1,0)=1$ corresponds to $11\in\mq_1$. Therefore, the numbers $E(n,k)$ satisfy the
recurrence relation
\begin{equation}\label{Enk-recu}
E(n,k)=(n-1)E(n-1,k-1)+nE(n-1,k),
\end{equation}
with the initial conditions $E(1,0)=1$ and $E(1,k)=0$ for $k\geq 1$.
It follows from~\eqref{Enk-recu}
that
\begin{equation*}
E_n(q)=(n+(n-1)q)E_{n-1}(q)
\end{equation*}
for $n\geq 1$, with the initial value $E_0(q)=1$.
Thus
$$E_n(q)=\prod_{i=1}^n(i+(i-1)q).$$
Recall that
$$\sum_{k=0}^n\stirling{n}{k}x^k=x(x+1)\cdots (x+n-1).$$
Therefore,
\begin{equation*}\label{Enq-explicit}
E_n(q)=\sum_{k=0}^n\stirling{n}{k}(1+q)^{n-k},
\end{equation*}
which leads to~\eqref{Enpq}, since $$E_n(p,q)=p^nE_n\left(\frac{q}{p}\right).$$
\end{proof}

\section{Stirling derangements and increasing binary trees}
\label{Section03}
Let $[k]^n$ denote the set of words of length $n$ in the alphabet $[k]$. For $\omega=\omega_1\omega_2\cdots\omega_n\in [k]^n$,
the reduction of $\omega$, denoted by $\redd(\omega)$, is the unique word of length $n$ obtained by replacing
the $i$th smallest entry by $i$. For example, $\redd(33224547)=22113435$.

Very recently, Ma and Yeh~\cite{Ma1602} introduced the definition of Stirling permutations of the second kind.
A permutation $\sigma$ of the multiset $[n]_2$ is a \emph{Stirling permutation of the second kind} of order $n$ whenever $\sigma$ can be written as a nonempty disjoint union of its distinct cycles and $\sigma$ has a standard cycle form that satisfies the following conditions:
\begin{itemize}
  \item [\rm (i)] For each $i\in [n]$, the two copies of $i$ appear in exactly one cycle;
  \item [\rm (ii)] Each cycle is written with one of its smallest entry first and the cycles are written in increasing order of their smallest entries;
  \item [\rm (iii)] The reduction of the word formed by all entries of each cycle is a Stirling permutation. In other words, if $(c_1,c_2,\ldots,c_{2k})$ is a cycle of $\sigma$, then $\redd(c_1c_2\cdots c_{2k})\in \mq_k$.
\end{itemize}
Let $\mqn^2$ denote the set of Stirling permutations of the second kind of order $n$.
In the following discussion, we always write $\sigma\in\mqn^2$ in standard cycle form.
Let $(c_1,c_2,\ldots,c_{2k})$ be a cycle of $\sigma$, where $k\geq2$.
An entry $c_i$ is called a \emph{cycle ascent plateau} if
$c_{i-1}<c_{i}=c_{i+1}$, where $2\leq i\leq2k-1$.
Denote by $\caplat(\sigma)$ (resp.~$\cyc(\sigma)$) the number of cycle ascent plateaux (resp.~cycles) of $\sigma$.
An entry $k\in[n]$ be called a \emph{fixed point} of $\sigma$ if $(kk)$ is a cycle of $\sigma$.
Let $\fix(\sigma)$ denote the number of fixed points of $\sigma$.
Using the fundamental transformation of Foata and Sch\"utzenberger~\cite{FS70}, we have
\begin{equation}\label{mqn2}
  \sum_{\sigma\in\mqn}q^{\lrmin(\sigma)}x^{\ap(\sigma)}y^{\bk_2(\sigma)}=\sum_{\tau\in \mqn^2}q^{\cyc(\tau)}x^{\caplat(\tau)}y^{\fix(\tau)},
\end{equation}
where $\bk_2(\sigma)$ is the number of blocks of $\sigma$ with length $2$.

The exponential generating function could be obtained as follows.
\begin{theorem}\label{thm_gen}
  \begin{equation*}
   \sum_{n\geq 0}\sum_{\tau\in \mqn^2}q^{\cyc(\tau)}x^{\caplat(\tau)}y^{\fix(\tau)}\frac{t^n}{n!}
   = \left(A\bigl(x,t\bigr)e^{t(y-1)}\right)^q,
  \end{equation*}
  where $A(x,z)$ is the exponential generating function given by~\eqref{Axz}.
\end{theorem}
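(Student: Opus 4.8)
The plan is to read the right-hand side as $\exp\bigl(q\,C(t)\bigr)$ and to identify $C(t)$ as the exponential generating function of a \emph{single} cycle. Writing $\tau\in\mqn^2$ in standard cycle form displays it as a set partition of $[n]$ into the label sets of its cycles, decorated by an order-isomorphic single-cycle Stirling permutation of the second kind on each block. Since $\cyc$ counts the cycles while both $\caplat$ and $\fix$ are sums of per-cycle contributions, the exponential formula gives
\[
\sum_{n\geq0}\sum_{\tau\in\mqn^2}q^{\cyc(\tau)}x^{\caplat(\tau)}y^{\fix(\tau)}\frac{t^n}{n!}=\exp\bigl(q\,C(t;x,y)\bigr),
\]
where $C(t;x,y)=\sum_{k\geq1}C_k(x,y)\frac{t^k}{k!}$ and $C_k(x,y)$ collects $x^{\caplat}y^{\fix}$ over the single cycles on $k$ labels.

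I would then isolate the dependence on $y$. A fixed point is precisely a cycle $(kk)$, that is, a single cycle on one label, whereas every single cycle on $k\geq2$ labels has $\fix=0$ and carries no power of $y$. Hence $C_1(x,y)=y$ and $C_k(x,y)=C_k(x,1)$ for $k\geq2$, so that $C(t;x,y)=C(t;x,1)+(y-1)t$, the correction term recording the unique fixed-point cycle at $k=1$.

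To pin down $C(t;x,1)$ I would specialize to $q=y=1$. Then $\exp\bigl(C(t;x,1)\bigr)=\sum_{n\geq0}\bigl(\sum_{\tau\in\mqn^2}x^{\caplat(\tau)}\bigr)\frac{t^n}{n!}$; by~\eqref{mqn2} at $q=y=1$ the inner sum equals $\sum_{\sigma\in\mqn}x^{\ap(\sigma)}=N_n(x)$, whereupon~\eqref{Ax2t} identifies the whole series as $\sqrt{A(x,2t)}$. Thus $C(t;x,1)=\tfrac12\log A(x,2t)$, and assembling the pieces produces
\[
\exp\bigl(q[\tfrac12\log A(x,2t)+(y-1)t]\bigr)=\bigl(\sqrt{A(x,2t)}\,e^{t(y-1)}\bigr)^{q},
\]
which is the asserted generating function.

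The delicate step is the very first one: one must verify that $\mqn^2$ genuinely forms an exponential structure whose connected pieces are the single cycles, i.e.\ that a partition of $[n]$ together with an order-isomorphic single cycle on each block reconstructs $\tau$ uniquely with no labelled symmetries to correct for, and that $\caplat$ and $\fix$ localize to the individual cycles. If this species-theoretic bookkeeping proves awkward, I would instead transport everything to the Stirling side through~\eqref{mqn2}, where the left-hand side becomes $\sum_{\sigma\in\mqn}q^{\lrmin(\sigma)}x^{\ap(\sigma)}y^{\bk_2(\sigma)}\frac{t^n}{n!}$: the unique block decomposition of a Stirling permutation manifestly partitions $[n]$ and lists the blocks in decreasing order of their leading minima, making the exponential structure transparent, with the length-two (single-value) blocks now playing the role of the fixed points and~\eqref{Ax2t} again supplying the base series.
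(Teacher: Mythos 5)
Your decomposition is sound and is in essence the paper's own: both arguments read the right-hand side as $\exp$ of $q$ times a connected (single-cycle, equivalently single-block) series, and both handle $\fix$ by replacing the degree-one term $t$ of that series with $yt$. Your computation of the connected series is also correct: specializing $q=y=1$ and invoking \eqref{mqn2} and \eqref{Ax2t} gives $C(t;x,1)=\tfrac12\log A(x,2t)$, whence
\begin{equation*}
  \sum_{n\geq 0}\sum_{\tau\in \mqn^2}q^{\cyc(\tau)}x^{\caplat(\tau)}y^{\fix(\tau)}\frac{t^n}{n!}
  =\exp\Bigl(q\bigl[\tfrac12\log A(x,2t)+(y-1)t\bigr]\Bigr)
  =\bigl(A(x,2t)\,e^{2t(y-1)}\bigr)^{q/2}.
\end{equation*}
The gap is your final clause, ``which is the asserted generating function'': it is not. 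The theorem asserts $\bigl(A(x,t)e^{t(y-1)}\bigr)^{q}$, and $\sqrt{A(x,2t)}\neq A(x,t)$ (squaring, one would need $(e^{t(x-1)}-x)^2=(1-x)(e^{2t(x-1)}-x)$, which already fails in degree $2$ in $t$). You have silently identified two genuinely different power series.

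The discrepancy is real, and it is the printed statement (together with the paper's own proof) that is at fault, not your derivation. For $n=2$ the set $\mq_2^2$ consists of $(11)(22)$, $(1122)$ and $(1221)$, giving the polynomial $q^2y^2+2qx$; the coefficient of $t^2/2!$ in your series is $q^2y^2+2qx$, whereas in $\bigl(A(x,t)e^{t(y-1)}\bigr)^{q}$ it is $q^2y^2+qx$. (More crudely, at $q=x=y=1$ the stated right-hand side enumerates $n!$ objects, while $|\mqn^2|=|\mqn|=(2n-1)!!$ by \eqref{mqn2}.) The paper's proof passes to marked permutations via \eqref{mqn2} and Theorem~\ref{thm01} but then computes the connected series as if the objects were \emph{ordinary} permutations, using $f(x,t)$ with $e^{f(x,t)}=A(x,t)$ instead of the doubled series $\tfrac12\log A(x,2t)$ that accounts for the optional marks on the non-minimal entries of each block; your calculation is precisely the corrected version of that argument. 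So keep your proof, but delete the false identification at the end and record instead that the right-hand side of the theorem should read $\bigl(A(x,2t)e^{2t(y-1)}\bigr)^{q/2}$.
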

\begin{proof}
In the same spirit as Theorem~\ref{thm02}, the generating function is equal to
\begin{equation*}
  \sum_{n\geq 0}\sum_{\pi\in \overline{\ms}_n} q^{\cyc(\pi)}x^{\asc(\pi)}y^{\fix(\pi)} \frac{t^n}{n!},
\end{equation*}
with the help of~\eqref{mqn2}.
The statistic $\fix(\pi)$ denotes the number of blocks of size 1 in $\pi$.

We use the language of combinatorial objects as in the book of Flajolet and Sedgewick~\cite{Flajolet2009}.
Take $\mathcal{C}$ to be the class of permutations in which the atom with label 1 is in the first place.
Then $\textsc{Set}(C)$ is exactly the class of permutations.
Let the generating function of $\mathcal{C}$ be
\begin{equation*}
  \Gen(\mathcal{C};\asc) := f(x,t) = t + x\frac{t^2}{2!} + (x+x^2)\frac{t^3}{3!} + O(t^4),
\end{equation*}
where the variable $x$ records the statistic ascent.
The interpretation of $\textsc{Set}(C)$ tells us that $e^{f(x,t)}=A(x,t)$.
Moreover,
\begin{equation*}
\sum_{n\geq 0}\sum_{\pi\in \overline{\ms}_n} q^{\cyc(\pi)}x^{\asc(\pi)} \frac{t^n}{n!} = e^{q\cdot f(x,t)}=A(x,t)^q
\end{equation*}
is also straightforward.
Now to take the statistic $\fix$ into account,
the first-order term of $f(x,t)$ should be changed from $t$ to $yt$. Hence the generating function in question is
\begin{equation*}
e^{q\cdot\left(f(x,t)-t+yt\right)} = \left(A\bigl(x,t\bigr)e^{t(y-1)}\right)^q
\end{equation*}
\end{proof}

A \emph{perfect matching} of $[2n]$ is a set partition of $[2n]$ with blocks (disjoint nonempty subsets) of size exactly 2.
Let $\mmn$ be the set of matchings of $[2n]$, and let $\m\in\mmn$.
The \emph{standard form} of $\m$ is a list of blocks $\{(i_1,j_1),(i_2,j_2),\ldots,(i_n,j_n)\}$ such that
$i_r<j_r$ for all $1\leq r\leq n$ and $1=i_1<i_2<\cdots <i_n$.
Throughout this paper we always write $\m$ in the standard form.
If $(i_s,j_s)$ is such a block of $\m$, then we say that $i_s$ (resp.~$j_s$) is an ascending (resp.~a descending) entry of $\m$.
\begin{definition}
Let $h_k$ be the number of
perfect matchings of $[4k]$ such that
$2i-1$ and $2i$ are either both ascending or both descending for
every $i\in[2k]$.
\end{definition}

Let $\textit{\i}^2={-1}$. Following~\cite{Sukumar07}, we have
\begin{equation*}
  \sqrt{\sec(2\textit{\i} z)}=\sum_{n\geq0}(-1)^nh_n\frac{z^{2n}}{(2n)!}.
\end{equation*}

\begin{definition}
A \emph{Stirling derangement} is a Stirling permutation without blocks of length $2$.
\end{definition}
Let $\mdq_n$ be the set of Stirling derangements of order $n$, i.e.,
$\mdq_n=\{\sigma\in\mqn|~\bk_2(\sigma)=0\}$.
The following result (in an equivalent form) has been algebraically obtained by Ma and Yeh~\cite[Page~15]{Ma1602}.
\begin{theorem}\label{thm04}
  \begin{equation*}
    \sum_{\sigma \in \mdq_n} (-1)^{\ap(\sigma)} =
    \left\{
      \begin{array}{ll}
        0, & \text{if $n$ is odd;} \\
        (-1)^k h_k, & \text{if $n = 2k$ is even.}
      \end{array}
    \right.
  \end{equation*}
\end{theorem}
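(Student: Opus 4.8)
The plan is to collapse the alternating sum into a single exponential-generating-function identity and then read off coefficients. Introduce the bivariate EGF
$$B(x,y,t)=\sum_{n\ge0}\sum_{\sigma\in\mqn}x^{\ap(\sigma)}y^{\bk_2(\sigma)}\frac{t^n}{n!},$$
so that the Stirling derangements are isolated by the specialization $y=0$ (using $0^{\bk_2(\sigma)}=1$ exactly when $\bk_2(\sigma)=0$). Writing $D(x,t)=B(x,0,t)=\sum_{n\ge0}\bigl(\sum_{\sigma\in\mdq_n}x^{\ap(\sigma)}\bigr)\frac{t^n}{n!}$, the quantity I want is $\sum_{\sigma\in\mdq_n}(-1)^{\ap(\sigma)}=n!\,[t^n]\,D(-1,t)$. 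The whole theorem will follow once I show $D(-1,t)=(\cosh 2t)^{-1/2}=\sqrt{\sec(2\textit{\i} t)}$ and compare with the given expansion $\sqrt{\sec(2\textit{\i} z)}=\sum_{n\ge0}(-1)^nh_n\,z^{2n}/(2n)!$.

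First I would establish $B(x,y,t)=\sqrt{A(x,2t)}\,e^{t(y-1)}$. The anchor is~\eqref{Ax2t}, which is exactly $B(x,1,t)=\sqrt{A(x,2t)}$. To insert the variable $y$ I use the block decomposition: every Stirling permutation factors uniquely into blocks, and under the bijection $\Phi$ of Theorem~\ref{thm01} (equivalently, through~\eqref{mqn2} and the cycle form, where $\bk_2$ becomes the number of fixed points) these blocks compose by the exponential formula — this is precisely what makes~\eqref{Ax2t} a clean square root. A block of length $2$ is a string $ii$ whose leading copy is a left-to-right minimum; such a block carries no ascent plateau, so its contribution to the block EGF is just $t$. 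Marking length-$2$ blocks by $y$ therefore replaces this first-order term $t$ by $yt$, multiplying the exponential by $e^{t(y-1)}$; hence $B(x,y,t)=\sqrt{A(x,2t)}\,e^{t(y-1)}$, and setting $y=0$ gives $D(x,t)=\sqrt{A(x,2t)}\,e^{-t}$.

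Next I specialize $x=-1$. From~\eqref{Axz}, $A(-1,2t)=\dfrac{2}{e^{-4t}+1}=\dfrac{e^{2t}}{\cosh 2t}$, so $\sqrt{A(-1,2t)}=e^{t}(\cosh 2t)^{-1/2}$ and therefore $D(-1,t)=e^{-t}\cdot e^{t}(\cosh 2t)^{-1/2}=(\cosh 2t)^{-1/2}$. Since $\cos(2\textit{\i} t)=\cosh 2t$, this equals $\sqrt{\sec(2\textit{\i} t)}$, and the cited expansion gives $D(-1,t)=\sum_{k\ge0}(-1)^kh_k\,t^{2k}/(2k)!$, an even function of $t$. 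Extracting coefficients finishes the proof: for odd $n$ the coefficient of $t^n$ vanishes, so $\sum_{\sigma\in\mdq_n}(-1)^{\ap(\sigma)}=0$; and for $n=2k$, $\sum_{\sigma\in\mdq_{2k}}(-1)^{\ap(\sigma)}=(2k)!\,[t^{2k}]\,D(-1,t)=(-1)^kh_k$.

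The only step that is not a routine computation is the derivation of the $y$-dependence of $B$ — that is, justifying that the length-$2$ blocks split off as the factor $e^{t(y-1)}$. The substance there is that block composition in $\mqn$ obeys the exponential formula and that a length-$2$ block is the unique atomic block with no ascent plateau; once this is in hand, the $x=-1$ simplification and the coefficient comparison are immediate. I would also verify small cases to pin down signs and normalization, e.g.\ $\mdq_2=\{1122,1221\}$ gives $\sum(-1)^{\ap(\sigma)}=-2=(-1)^1h_1$ with $h_1=2$, matching the matchings $\{13,24\}$ and $\{14,23\}$ of $[4]$.
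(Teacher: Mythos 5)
Your argument is correct, but it takes a genuinely different route from the paper's. The whole point of this section is to give a \emph{bijective} proof (the identity itself is credited to Ma and Yeh as an algebraically obtained result): the paper transports Stirling derangements through $\Phi$ and $\varphi$ to bicolored increasing binary trees, applies a sign-reversing involution that relocates the subtree of the minimal non-special one-child node, and identifies the surviving complete trees (equivalently, reverse alternating marked permutations) with the perfect matchings counted by $h_k$. You instead take the generating-function route: the block decomposition of Stirling permutations obeys the exponential formula, the unique atomic block on one letter is $ii$ and contributes the term $t$ with no ascent plateau, hence
$\sum_{n}\sum_{\sigma\in\mqn}x^{\ap(\sigma)}y^{\bk_2(\sigma)}t^n/n!=\sqrt{A(x,2t)}\,e^{t(y-1)}$,
and then $y=0$, $x=-1$ collapse everything to $(\cosh 2t)^{-1/2}=\sqrt{\sec(2\textit{\i}t)}$. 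I checked the computations ($A(-1,2t)=e^{2t}/\cosh 2t$, the cancellation of $e^{\pm t}$, the parity of the resulting series, and the $n=2$ case against $h_1=2$) and they hold; the one step you rightly flag as non-routine --- the exponential-formula factorization isolating the length-$2$ blocks --- is sound, because both copies of each letter lie in a single block, ascent plateaux are local to blocks, and a set of atomic blocks reassembles uniquely in decreasing order of their minima. One caution: you could not have obtained your key identity by simply quoting Theorem~\ref{thm_gen} at $q=1$ via~\eqref{mqn2}, since as printed that theorem gives the factor $A(x,t)$ rather than $\sqrt{A(x,2t)}$, which is inconsistent with~\eqref{Ax2t} at $y=1$; your independent derivation from~\eqref{Ax2t} is the correct one. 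What your approach buys is brevity and independence from the tree machinery; what it gives up is the explicit involution and the direct bijection onto the matchings defining $h_k$, which is precisely the content this section sets out to supply.
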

In the rest of this section, we shall present a bijective proof of Theorem~\ref{thm04}.

Let $\varphi$ be the bijection between permutations and increasing binary trees defined by the inorder traversal in depth-first search.
The left-to-right minima of $\pi\in \msn$ will be the labels in the leftmost path of the corresponding trees.
Let $T_n$ denote the set of bicolored increasing binary trees on $n$
nodes such that all the nodes in the leftmost path are white and the other nodes are white or black.
Then $\varphi$ is a bijection from $\overline{\ms}_n$ to $T_n$ if we match white nodes (resp.~black nodes) with those unmarked letters (resp.~marked letters.)
Note that all of the left-to-right minima of a marked permutation are mapped to the
nodes of the leftmost path of its increasing binary tree. These
nodes will be called \emph{special nodes}.

For example, the Stirling permutation $\sigma = 223315544166$
corresponds to the marked permutation
$\pi = 2 \, 3 \, 1 \, \overline{5} \, \overline{4} \, 6$, which in
turn corresponds to the bicolored increasing tree with $6$ nodes in
Figure~\ref{fig:231546}. Note that the special nodes are labelled 1 and 2.
 \begin{figure}[ht]
    \centering
    \begin{tikzpicture}
      \node (a) [circle, draw] {}
      child { node [circle, draw] {}
        child [missing]
        child { node [circle, draw] {} } }
      child [missing]
      child { node [circle, draw, fill] {}
        child { node [circle, draw, fill] {} }
        child { node [circle, draw] {} }
      };
      \node at (a) [right=0.2cm] {$1$};
      \node at (a-1) [left=0.2cm] {$2$};
      \node at (a-1-2) [left=0.2cm] {$3$};
      \node at (a-3) [right=0.2cm] {$4$};
      \node at (a-3-1) [left=0.2cm] {$5$};
      \node at (a-3-2) [right=0.2cm] {$6$};
    \end{tikzpicture}
    \caption{The bicolored tree corresponded to $2 \, 3 \, 1 \,
      \overline{5} \, \overline{4} \, 6$}
    \label{fig:231546}
  \end{figure}

\medskip

\noindent{\bf A bijective proof of Theorem~\ref{thm04}.}\\
Now we consider the composition $\varphi \circ \Phi$ from
$\mqn$ to $T_n$, where $\Phi$ is defined in the proof of Theorem~\ref{thm01}.
It is easily observed that $k$ is an ascent plateau of $\sigma \in \mqn$ if and only if the node $k$ in the bicolored increasing binary tree $\varphi(\theta(\sigma))$ is non-special and does not have a left child.
Also the Stirling derangements are mapped to those trees each of whose special nodes has a right child.

For trees in $T_n$, there are some trees that
have a non-special node with only one child.  An involution can be introduced on
these trees: among the non-special nodes with only one child, find
the one with the minimal label, then move its whole subtree to the
other branch.  An example on this involution is shown in
Figure~\ref{fig:inv-tree}.

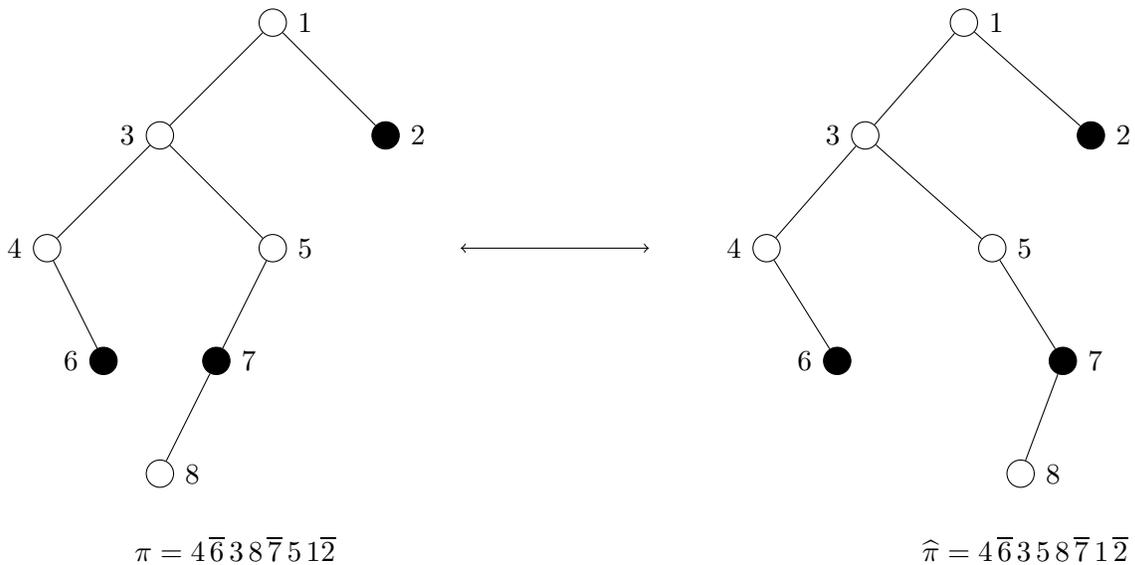
\begin{figure}[ht]
  \centering
  \begin{tikzpicture}
    \node (a) [circle, draw] {}
      child { node [circle, draw] {}
        child { node [circle, draw] {}
          child [missing]
          child { node [circle, draw, fill] {}
          }}
        child [missing]
        child { node [circle, draw] {}
          child { node [circle, draw, fill] {}
            child { node [circle, draw] {} }
            child [missing] }
          child [missing] }
        }
      child [missing]
      child { node [circle, draw, fill] {} };
    \node at (a) [right=0.2cm] {$1$};
    \node at (a-1) [left=0.2cm] {$3$};
    \node at (a-1-1) [left=0.2cm] {$4$};
    \node at (a-1-1-2) [left=0.2cm] {$6$};
    \node at (a-1-3) [right=0.2cm] {$5$};
    \node at (a-1-3-1) [right=0.2cm] {$7$};
    \node at (a-1-3-1-1) [right=0.2cm] {$8$};
    \node at (a-3) [right=0.2cm] {$2$};
    \node at (-0.5,-7) {$\pi = 4 \, \overline{6} \, 3 \, 8 \,
      \overline{7} \, 5 \, 1 \overline{2}$};
    \draw [<->] (2.5,-3) -- (5,-3);
    \begin{scope}[right=9cm]
      \node (a) [circle, draw] {}
        child { node [circle, draw] {}
          child { node [circle, draw] {}
            child [missing]
            child { node [circle, draw, fill] {}
            }}
            child [missing]
            child { node [circle, draw] {}
              child [missing]
              child { node [circle, draw, fill] {}
                child { node [circle, draw] {} }
                child [missing] }
             }
          }
        child [missing]
        child { node [circle, draw, fill] {} };
      \node at (a) [right=0.2cm] {$1$};
      \node at (a-1) [left=0.2cm] {$3$};
      \node at (a-1-1) [left=0.2cm] {$4$};
      \node at (a-1-1-2) [left=0.2cm] {$6$};
      \node at (a-1-3) [right=0.2cm] {$5$};
      \node at (a-1-3-2) [right=0.2cm] {$7$};
      \node at (a-1-3-2-1) [right=0.2cm] {$8$};
      \node at (a-3) [right=0.2cm] {$2$};
      \node at (-0.5,-7) {$\widehat{\pi} = 4 \, \overline{6} \, 3 \, 5
        \, 8 \, \overline{7} \, 1 \, \overline{2}$};
    \end{scope}
  \end{tikzpicture}
  \caption{Involution on trees in $T_n$; the
    move is made on Node 5.}
  \label{fig:inv-tree}
\end{figure}

Following the discussion in the previous
paragraph, we immediately see that this involution is sign-reversing
if we attach the sign $(-1)^{\ap(\sigma)}$ to the tree
$\varphi(\theta(\sigma))$.

For those trees to which the above involution cannot be applied, all
of its nodes have either zero or two children; these trees are
called \emph{complete}, whose number of nodes $n$ must be even, and
it has the sign $(-1)^{\ap(\sigma)}=(-1)^{n/2}$. From here we
conclude that when $n$ is \emph{odd}, these trees are
sign-balanced.  Our remaining task is to give a proper count on
those complete bicolored increasing binary trees on $n = 2k$ nodes.

Let $\sigma \in \mdq_n$ such that $\varphi(\theta(\sigma))$ is
complete. Then the intermediate marked permutation
\begin{equation*}
  \pi = \theta(\sigma) = \pi_1 \pi_2 \dots \pi_n
\end{equation*}
is reverse alternating, i.e., $\pi_1<\pi_2> \pi_3< \cdots$.
Note that in these situations the left-to-right minima of $\pi$
always occur at odd-indexed positions. We now apply the
Foata transformation on $\pi$ by making its left-to-right minima as
the head of each cycle; say $\pi \mapsto C_1 C_2 \dots C_j$.  Each
cycle determines some pairings on $\pm[n]$: for any
$C = ( c_1 c_2 \dots c_\ell )$, we pair $\overline{c_i}$ with $c_{i+1}$ for $1 \leq i < \ell$, and pair $\overline{c_\ell}$ with
$c_1$.  Since the number at the beginning of a cycle must be unmarked,
we have a bijection between complete bicolored increasing
binary trees on $n$ nodes with those special perfect matchings on $[4k] = [2n] \simeq \pm[n]$. Using
definition of the numbers $h_k$, we get the desired result. \qed

\begin{ex} We use the ordering
$\overline{1} < 1 < \overline{2} < 2 < \cdots < \overline{n} < n$ on
$\pm[n] \cong [2n] = [4k]$.  The reverse alternating marked
permutation $\pi = 3 \, \overline{5} \, 1 \, \overline{6} \, \overline{2} \, 4$
corresponds to the following perfect matching on $\pm[6]$:
\begin{equation*}
  \overline{3} \, \overline{5} /
  5 \, 3 /
  \overline{1} \, \overline{6} /
  6 \, \overline{2} /
  2 \, 4 /
  \overline{4} \, 1.
\end{equation*}
Because $\pi$ is reverse alternating, the numbers $i$ and $\overline{\imath}$ are both
ascending or both descending for all $i \in [n]$ in the corresponding
perfect matching.
\end{ex}

\section{On $r$-colored marked permutations}
\label{Section04}

The definition of marked permutations can easily be extended to the
$r$-colored version for any positive integer $r$.
We think that a marked permutation (resp. an ordinary permutation) is in the special case $r=2$ (resp. $r=1$) in which the unmarked elements are painted by the first color, while the others are painted by the second color.
The definition for the \emph{$r$-colored marked permutations} is similar to that in the two-colored situation, i.e., every left-to-right minimum of $\pi$ must be painted by the first color, which is always referred as white in this section.
We find that on the $r$-colored marked permutations the ascent statistic is equidistributed with another statistic, which is defined below.

Let $\overline{\ms}_n^{(r)}$ be the set of $r$-colored marked permutations of $[n]$.
For $\pi\in \overline{\ms}_n^{(r)}$,
we define
  \begin{equation*}
    \desrlmin(\pi) = \Bigl|
      \{ \pi_i \mid \text{$\pi_{i-1} > \pi_i$ for $i > 1$, or $\pi_i$ is a right-to-left minimum of $\pi$} \}
    \Bigr|.
  \end{equation*}

\begin{theorem}\label{thm03}
  Let $r$ be a positive integer. The following polynomial is symmetric in the variables $x$ and
  $y$:
  \begin{equation*}
    F_r(x,y) = \sum_{\pi \in \overline{\ms}_n^{(r)}}
    x^{\asc(\pi)+1}y^{\desrlmin(\pi)}.
  \end{equation*}
\end{theorem}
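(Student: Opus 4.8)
The plan is to carry the two statistics over to the increasing binary tree picture introduced in Section~\ref{Section03} and there exhibit a weight-preserving involution that interchanges them. First I would apply $\varphi$ to the permutation underlying a given $\pi\in\overline{\ms}_n^{(r)}$; recall that the left-to-right minima become the nodes of the leftmost path, which are exactly the white (first-colour) nodes. The same local analysis that identifies ascent plateaux with non-special nodes having no left child shows that $\asc(\pi)+1$ equals the number of nodes with no left child, the extra unit coming from the bottom node of the leftmost path. Dually, a descent bottom of $\pi$ is precisely a node that possesses a left child, and a right-to-left minimum is precisely a node lying on the rightmost path; hence $\desrlmin(\pi)$ is the number of nodes that either have a left child or lie on the rightmost path. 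Thus, up to the colours, the assertion becomes the symmetry of $\sum x^{L_0}y^{L_1}$ over the tree class $T_n$, where $L_0$ counts nodes with no left child and $L_1$ counts nodes with a left child or on the rightmost path. A short check (e.g.\ at $n=3$) shows that for $r=1$ this bare count is already symmetric, so the $r$ extra colours must feed into $\desrlmin$ through the non-white nodes; capturing that refinement is what distinguishes the general case from $r=1$.

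For the combinatorial core I would build an involution $\Psi$ on the colour-augmented trees sending a tree with pair $(L_0,L_1)=(a,d)$ to one with pair $(d,a)$. The natural engine is the left--right reflection of a binary tree, which swaps ``has a left child'' with ``has a right child'' and swaps the leftmost with the rightmost path; after reflecting I would recolour so that the new leftmost path (the old rightmost path) is white and transport the remaining colours by a fixed rule, so that the image again lies in $\overline{\ms}_n^{(r)}$. Reflection is the right starting point precisely because the rightmost path, which controls the right-to-left-minima contribution to $\desrlmin$, is the object it turns into the leftmost path and hence (after one more reading) into the no-left-child count. An alternative, closer to the methods used elsewhere in the paper, is to push everything through the fundamental transformation of Foata--Sch\"utzenberger, turning $\lrmin$ into $\cyc$ and the ascent/right-to-left-minimum data into excedance/cycle data, and to establish the symmetry in the cycle model.

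The hard part, and the step I expect to consume the real effort, is making reflection genuinely statistic-exchanging once the distinguished path and the colours are present: bare reflection does not send $L_0$ to $L_1$, because the two statistics treat the rightmost path asymmetrically, and it does not preserve the constraint that the leftmost path be white. I anticipate needing an auxiliary local surgery along the two extreme paths together with a matching rule for carrying the $r$ colours across $\Psi$, tuned so that the right-to-left-minima count is correctly traded against the no-left-child count colour by colour; proving that the resulting map is a well-defined involution on $\overline{\ms}_n^{(r)}$ and that it swaps $\asc+1$ with $\desrlmin$ is the crux. As an independent confirmation I would compute the bivariate exponential generating function by the symbolic method of Flajolet--Sedgewick in the style of Theorem~\ref{thm_gen}, aiming at a closed form that is manifestly symmetric in $x$ and $y$.
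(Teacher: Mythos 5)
Your reformulation of the two statistics on increasing binary trees is correct: $\asc(\pi)+1$ does count the nodes with no left child, and $\desrlmin(\pi)$ does count the nodes that have a left child or lie on the rightmost path (the right-to-left minima being exactly the rightmost path). But the proof stops exactly where it would have to start. The entire content of the theorem is the existence of an involution exchanging these two counts, and you never produce one: you nominate left--right reflection as ``the natural engine,'' then correctly observe that bare reflection does not send one statistic to the other (it exchanges ``no left child'' with ``no right child'' and swaps the two extreme paths, which is not the required exchange) and that it destroys the white-leftmost-path constraint, and you defer the repair to an unspecified ``auxiliary local surgery.'' An acknowledged missing crux is a gap, not a proof; the same applies to the two fallback suggestions (Foata transfer to the cycle model, or a symbolic-method computation of the generating function), neither of which is carried out or even set up far enough to see that it would close.

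For comparison, the paper's proof does not pass through trees at all. It works directly on $\ms_n$: cut $\pi$ into blocks ending at its right-to-left minima, cyclically rotate each block so that its largest entry is last, reorder the blocks so that these largest entries decrease, and finally complement every value ($i\mapsto n+1-i$). A check shows this map $\psi$ is an involution with $\asc(\pi)+1=\desrlmin(\psi(\pi))$, and since $\psi$ only permutes positions within a controlled block structure, the $r$ colours can be transported along with the entries (keeping the leftmost-path/left-to-right-minima white), so the colour multiplicities are preserved. If you want to salvage your tree-based route, the honest task is to translate this rotate-and-complement involution into tree surgery, or to find a genuinely different involution on $T_n$ that swaps the set of nodes lacking a left child with the set of nodes having a left child or lying on the rightmost path; reflection alone will not do it, for precisely the reason you yourself identify.
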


\begin{proof}
We will construct an involution $\psi$ on $\ms_n$ first.  Let
$\pi = \pi_1 \pi_2 \dots \pi_n \in \ms_n$. Again we divide $\pi$
into several blocks by marking each right-to-left minimum of $\pi$ as
the end of a block; we will view each block as a cycle.  Now we
rotate each block in order to list its largest number at the end of
its block; then arrange these blocks so that these largest numbers
are in decreasing order; lastly take the complement of every number
to get the final result $\psi(\pi)$.

For example, let us go through step by step on the permutation $\pi= 214853697 \in \ms_9$:
  \begin{equation*}
    \begin{split}
      \pi = 21\textbf{48}53\textbf{69}7 & \to 21 | 4853 | 6 | 97 \\
      & \to 12 | 5348 | 6 | 79 \\
      & \to 79 | 5348 | 6 | 12 \\
      & \to 31 | 5762 | 4 | 98 \mapsto 3\textbf{1}57\textbf{624}9\textbf{8} = \psi(\pi).
    \end{split}
  \end{equation*}
A careful examination shows that $\psi$ is an involution on $\ms_n$, and $\asc(\pi)+1 = \desrlmin(\psi(\pi))$.

For $r > 1$, we can take a certain algorithm to ensure that every
number in $\psi(\pi)$ is still colored white. For example,
consider the colors are taken modulo $r$ by addition in each block;
or during the block rotation process we keep the colors fixed at
their respective positions.
Since the above involution $\psi$ does not depend on the
coloring, we still have the result in symmetry.
\end{proof}

The generating function
$$F_r(x,y) := \sum_{\pi \in \overline{\ms}_n^{(r)}} x^{\asc(\pi)+1}y^{\desrlmin(\pi)}$$ can be obtained by using context-free grammars,
as done by Dumont~\cite{Dumont1996}.
For $r = 1$, the grammar is
$\{ a \to ab, b \to ab, c \to ab, e \to ce \}$, which had been studied
by Roselle~\cite{Roselle1968}.
Let $A(x,y,z,t)$ be a refinement for the Eulerian polynomials:
\begin{equation*}
  A(x,y,z,t) = zt + \sum_{n \geq 2} \sum_{\pi \in \ms_n}
  x^{\asc(\pi)+1} y^{\des(\pi)} \frac{t^n}{n!}.
\end{equation*}
Then the generating function for $F_1(x,y)$ is shown
in~\cite{Dumont1996} to be
\begin{equation*}
  G_1(x,y,t) := \sum_{n \geq 0} F_1(x,y) \frac{t^n}{n!} = \exp \bigl( A(x,y,xy,t)
  \bigr).
\end{equation*}
We define $\displaystyle G_r(x,y,t) := \sum_{n \geq 0} F_r(x,y) \frac{t^n}{n!}$.
For any $r$-colored permutation, it can also be partitioned into blocks by the method described in the proof of Theorem~\ref{thm03}. Then we can assign those blocks whose end entries are painted by the same color into the same group. It is clear that the number of $r$-colored marked permutations of length $n$ is the same as those $r$-colored permutations of length $n$ whose right-to-left minima are all painted by any other color.  Hence we have the identity:
\begin{equation*}
  G_1(x,y,rt) = \bigl( G_r(x,y,t) \bigr)^r,
\end{equation*}
that is,
\begin{equation*}\label{eq:r-mark}
  G_r(x,y,t) = \bigl( G_1(x, y, rt) \bigr)^{1/r}.
\end{equation*}

\section{An interpretation of $r$-colored marked permutations}\label{Section05}
Suppose a class of combinatorial objects $\C$ has the generating function $\Gen(\C)=f(\textbf{x},t)$, where $\textbf{x}$ is the vector $\textbf{x}=(x_1,x_2,\dots,x_s)$, and the power of $x_i$ records certain statistic $\stati$ on $\C$.
Here the generating function can be either ordinary or exponential, as an ordinary one can be
seen as an exponential one with arbitrary labelling.

Recall that
\begin{equation*}
g(\textbf{x}, t,r_1,r_2,\dots,r_k) := f(\textbf{x},(r_1+r_2+\dots+r_k)t)
\end{equation*}
counts a weighted version of $\mathcal{C}$, denoted by ${}^k\mathcal{C}$, where
each atom in an object in $\mathcal{C}$ is weighted by a number in $[k]$.
An object in ${}^k\mathcal{C}$ with $n$ atoms could be seen as $(C,w)$, where
$C\in\mathcal{C}$ and $w$ is a map from $[n]$ to $[k]$.
Then the power of $x_i$ in $g$ records $\stati(C)$, and the power of $r_j$ records the number of atoms in $C$ with weight $j$, i.e., the cardinality of the pre-image $w^{-1}(j)$.

It is well-known that for a given non-negative integer $k$,
\begin{equation*}
  f_{{\rm seq},k}(\textbf{x}, t) := f(\textbf{x},t)^k
\end{equation*}
counts $\textsc{Seq}_k(\mathcal{C})$. Each object in $\textsc{Seq}_k(\mathcal{C})$ is of the form of a $k$-sequence: $(C_1,C_2,\dots,C_k)$ with each $C_i\in\mathcal{C}$.
The power of $x_i$ in $h_s$ records $\stati(C_1)+\stati(C_2)+\dots+\stati(C_k)$.
On the other hand,
\begin{equation*}
  f_{{\rm set},k}(\textbf{x}, t) := \frac{f(\textbf{x},t)^k}{k!}
\end{equation*}
counts $\textsc{Set}_k(\mathcal{C})$.  This induced class contains objects of the form of a $k$-set $\{C_1,C_2,\dots,C_k\}$ with each $C_i\in\mathcal{C}$.
The power of $x_i$ in $h_s$ also records $\stati(C_1)+\stati(C_2)+\dots+\stati(C_k)$.

When $\mathcal{C}$ has no objects of size 0, the power series
\begin{equation*}
  f_{{\rm seq}}(\textbf{x}, t) := \frac{1}{1-f(\textbf{x},t)}
\end{equation*}
is well-defined, and it counts $\textsc{Seq}(\mathcal{C}):=\cup_{k=0}^\infty \textsc{Seq}_k(\mathcal{C})$; while the power series
\begin{equation*}
  f_{{\rm set}}(\textbf{x}, t) := \exp \left({f(\textbf{x},t)}\right)
\end{equation*}
counts $\textsc{Set}(\mathcal{C}):=\cup_{k=0}^\infty \textsc{Set}_k(\mathcal{C})$.

Suppose now that $\mathcal{C}$ is a combinatorial objects with
no objects of size 0, with generating function $\Gen(\mathcal{C})=f(\textbf{x},t)$.
Observe the formal power series
\begin{equation*}
  h(\textbf{x},t):=\frac{1}{k}f(\textbf{x},kt)
\end{equation*}
has non-negative integer coefficients.
This fact suggests that there should be some general method to induce a
new class ${}^{*k}\mathcal{C}$ of combinatorial objects that is enumerated by $h$.
Suppose we have a rule to specify an atom from any object $C$ in $\mathcal{C}$; that atom will be called a \emph{distinguished atom} of $C$. For example, let the distinguished atom be the one with label 1.
Then we could define ${}^{*k}\mathcal{C}$ to be a weighted version
of $\mathcal{C}$, where each atom except the distinguished one in an object in $\mathcal{C}$ is weighted by a number in $[k]$.
The combinatorial interpretation of the refined generating function
\begin{equation*}
  h(\textbf{x},t,r_1,r_2,\dots,r_k):=\frac{1}{r_1+r_2+\dots+r_k}f(\textbf{x},(r_1+r_2+\dots+r_k)t)
\end{equation*}
is straightforward.

Take $\mathcal{C}$ to be the class of permutations in which the atom with label 1 is in the first place.
Then $\textsc{Set}(C)$ (resp.~$\textsc{Set}({}^{*r}\mathcal C)$) is exactly the class of permutations (resp.~$r$-colored marked permutations) that we introduced in this article.


\end{document}